\titleformat*{\section}{\Large\bfseries}
\titleformat*{\subsection}{\large\bfseries}
\titleformat*{\subsubsection}{\large\bfseries}
\titleformat*{\paragraph}{\large\bfseries}
\titleformat*{\subparagraph}{\large\bfseries}
\newtheorem{teo}{Theorem}
\newtheorem{lema}[teo]{Lemma}
\newtheorem{cor}[teo]{Corollary}
\newtheorem{prop}[teo]{Proposition}
\newtheoremstyle{mytheoremstyle} % name
{\topsep}                    % Space above
{\topsep}                    % Space below
{}                   % Body font
{}                           % Indent amount
{\scshape}                   % Theorem head font
{.}                          % Punctuation after theorem head
{.5em}                       % Space after theorem head
{}  % Theorem head spec (can be left empty, meaning ‘normal’)
\theoremstyle{mytheoremstyle} \newtheorem{nota}{Remark}
\theoremstyle{mytheoremstyle} 
\numberwithin{equation}{section}
\newcommand{\real}{\mathbb{R}}
\newcommand \ben {\begin{equation}}
\newcommand \een {\end{equation}}
\newcommand \be {\begin{equation*}}
\newcommand \ee {\end{equation*}}
\newcommand \bi {\begin{itemize}}
\newcommand \ei {\end{itemize}}
\DeclareMathOperator*{\parteim}{Im}
\DeclareMathOperator*{\dist}{dist}
\DeclareMathOperator*{\supp}{supp}
\DeclareMathOperator*{\NLS}{NLS}
\title{\textbf{Finite speed of disturbance for the nonlinear Schrödinger equation}}
\author{Simão Correia}
\begin{document}
\maketitle
\begin{abstract}
We consider the Cauchy problem for the nonlinear Schrödinger equation on the whole space. After introducing a weaker concept of finite speed of propagation, we show that the concatenation of initial data gives rise to solutions whose time of existence increases as one translates one of the initial data. Moreover, we show that, given global decaying solutions with initial data $u_0, v_0$, if $|y|$ is large, then the concatenated initial data $u_0+v_0(\cdot -y)$ gives rise to globally decaying solutions.
\vskip10pt
\noindent\textbf{Keywords}: nonlinear Schrödinger equation; global well-posedness; finite speed of disturbance.
\vskip10pt
\noindent\textbf{AMS Subject Classification 2010}: 35Q55, 35A01.
\end{abstract}
\section{Introduction}
In this work, we consider the classical nonlinear Schrödinger equation in $\real^d$,
\begin{equation}\tag{NLS}\label{NLS}
iu_t + \Delta u + \lambda|u|^\sigma u=0,
\end{equation}
where $\lambda\in\real$ and $0<\sigma<4/(d-2)^+$. The initial value problem for $u_0\in H^1(\real^d)$ is locally well-posed (see, for example, the monograph \cite{cazenave}; also \cite{kato} and \cite{kato1}). If the corresponding solution is defined for all positive times, one says it is \textit{global}. Otherwise, the maximal time of existence $T(u_0)$ is finite and the solution \textit{blows up} at $t=T(u_0)$:
$$
\lim_{t\to T(u_0)} \|\nabla u(t)\|_2=+\infty.
$$
The initial value problem for $u_0\in H^s(\real^N)$, $0\le s <1$ can also be considered. Under the hypothesis of $H^s$-\textit{subcriticallity}, that is,
$
\sigma <4/(d-2s)^+,
$
the analogous local well-posedness result holds, with the blow-up alternative
$$
\lim_{t\to T(u_0)} \|u(t)\|_{H^s}=+\infty.
$$
In the critical case $\sigma=4/(N-2s)$, one has the so-called \textit{conditional local well-posedness}, where the blow-up alternative is replaced by
$$
\|u\|_{L^\infty((0,T(u_0)), H^s)} + \|u\|_{L^\gamma((0,T(u_0)), B^{s}_{\rho,2})} = +\infty,
$$
for some specific $\gamma$ and $\rho$. On the other hand, the proof of local well-posedness for critical cases usually yields a proof of global existence for small data. 

Notice that we purposely omitted the dependence of the time of existence on $s$. In fact, using standard persistence arguments, one may prove that the time of existence in $H^{s'}$ for $u_0\in H^s$ is the same for all $0\le s' \le s$. As a consequence, $H^s$-subcritical cases will also have global existence for small data.

There are various situations where one may bound \textit{a priori} the solution, thus ensuring that it is global. Examples include the global existence in $H^1$ for the defocusing case $\lambda<0$ or under the $L^2$-subcritical case. On the other hand, in the focusing case $\lambda>0$ and in the $L^2$-(super)critical case, one may prove indirectly the existence of blow-up through a Virial argument (cf. \cite{glassey}): if $xu_0\in L^2$,
$$
\frac{d^2}{dt^2}\|xu(t)\|_2^2\le 16E(u_0) :=16\left(\frac{1}{2}\|\nabla u_0\|_2^2 - \frac{\lambda}{\sigma+2}\|u_0\|_{\sigma+2}^{\sigma+2}\right),\quad t<T(u_0).
$$
Hence, for $E(u_0)<0$, if $T(u_0)=+\infty$, the quantity $\|xu(t)\|_2$ would become negative at some point, which is absurd.
\vskip15pt
The Schrödinger equation has \textit{infinite speed of propagation}: the information at one point $x\in\real^d$ can influence points at arbitrary distance. This may be seen in the linear equation either by taking as initial condition the Dirac delta or by observing that larger frequencies travel faster. In fact, it has been proven in \cite{kenigponcevega} that the only solution of (NLS) with compact support at two different times is the zero solution. Finite speed of propagation is a useful tool to obtain qualitative results on the dynamics of an equation, the classical example being the wave equation. One of these properties is \textit{localization}, \textit{i.e.}, to study what happens near a point $x\in\real^d$, one only needs to look at the backward light cone. 

A similar application of finite speed of propagation is the \textit{concatenation} of initial data: if one takes two compactly supported initial data $u_0, v_0$, then, given $v\in\real^d$, the solution with initial condition $u_0+v_0(\cdot - y)$, $|y|$ large, will behave like the sum of the individual solutions up to a large time $T_y$. Moreover, if the nonlinear effects have already dissipated by time $T_y$, one expects that the solution is global. Results of this type are currently unavailable for the \eqref{NLS}.

The aim of this work is to give a weaker notion of speed of propagation and use it to obtain concatenation results for the (NLS). Finite speed of disturbance is an estimate for the amount of information that appears in some observation set, taking into account the distance between that set and the initial support. The notion will be introduced in Section \ref{sec:finitespeed}. 
\vskip15pt
We now state the main result of this paper. Since global existence in the $L^2$-subcritical case is already known for any initial data, we focus on the $L^2$-(super)critical case $\sigma\ge 4/d$. In what follows, $\NLS(u_0)$ will denote the maximal solution of (NLS) with initial condition $u_0$. Given any time interval $I$, we set
$$
\|u\|_{S^s(I)} = \|u\|_{L^\infty(I,H^s)} + \|u\|_{L^{\gamma}(I, B^{s}_{\rho,2})}, \quad \rho=\frac{d(\sigma+2)}{d+s\sigma},\ \gamma=\frac{4(\sigma+2)}{\sigma(d-2s)}, \ 0<s<1,
$$
and, for $q=4(\sigma+2)/\sigma d$,
$$
\|u\|_{S^0(I)}= \|u\|_{L^\infty(I, L^2)} + \|u\|_{L^{\sigma+2}(I, L^{\sigma+2})},\quad \|u\|_{S^1(I)}= \|u\|_{L^\infty(I, H^1)} + \|u\|_{L^{q}(I, W^{1, \sigma+2})}
$$

Define the set of global decaying solutions with bounded Strichartz norms (up to order one) as
$$
\mathcal{GD}=\left\{ u_0\in H^1(\real^d): T(\NLS(u_0))=\infty,\ \|\NLS(u_0)\|_{S^1(0,\infty)}<\infty \right\}.
$$
As proven in \cite[Theorem 6.2.1]{cazenave}, there exists $\delta>0$ such that
$$
\{u_0\in H^1(\real^d): \|u_0\|_{H^1}<\delta\} \subset \mathcal{GD}.
$$
\begin{teo}[Concatenation of initial data]\label{teocritico}
Set $\sigma = 4/d$ or $\sigma\ge\min\{1,4/d\}$. Given initial data $u_0, v_0\in H^1$,  a fixed time $T<T(u_0), T(v_0)$ and $\epsilon>0$, there exists $D_T>0$ such that, for any $w_0\in H^1$ small enough, 
$$
T(u_0+v_0(\cdot-y) + w_0)>T, \quad |y|>D_T
$$
and, taking $s$ such that $\sigma=4/(d-2s)$,
$$
\| \NLS(u_0+v_0(\cdot-y) + w_0) - \NLS(u_0)-\NLS(v_0(\cdot-y))\|_{S^s(0,T)} <\epsilon.
$$
Moreover, if $u_0, v_0\in \mathcal{GD}$,
there exists $D_\infty>0$ such that $u_0+v_0(\cdot - y) + w_0\in \mathcal{GD}$, $|y|>D_\infty$, and 
$$
\| \NLS(u_0+v_0(\cdot-y) + w_0) - \NLS(u_0)-\NLS(v_0(\cdot-y))\|_{S^s(0,\infty)} <\epsilon.
$$
\end{teo}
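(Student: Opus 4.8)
The plan is to recast the problem as a long-time perturbation statement around the approximate solution $U=u+\tilde v$, and to feed into it the smallness of the interaction error produced by the finite speed of disturbance of Section \ref{sec:finitespeed}. Write $u=\NLS(u_0)$, $v=\NLS(v_0)$ and, by translation invariance, $\tilde v(t)=v(t,\cdot-y)=\NLS(v_0(\cdot-y))$. Since $u$ and $\tilde v$ both solve (NLS), a direct computation gives
\begin{equation}
iU_t+\Delta U+\lambda|U|^\sigma U = e,\qquad e:=\lambda\bigl(|U|^\sigma U-|u|^\sigma u-|\tilde v|^\sigma\tilde v\bigr).
\end{equation}
The genuine solution $\psi=\NLS(u_0+v_0(\cdot-y)+w_0)$ has initial data differing from $U(0)$ exactly by $w_0$. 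Thus, recalling that $\|U\|_{S^s(I)}\le\|u\|_{S^s(I)}+\|v\|_{S^s(I)}=:M<\infty$ on $I=(0,T)$ (finite because $T<T(u_0),T(v_0)$, and with $s=0$, $S^0$ in the mass-critical case $\sigma=4/d$), the standard long-time stability/perturbation lemma for the $H^s$-critical (NLS) applies: if $\|e\|_{N(I)}$ in a dual Strichartz norm $N(I)$ and $\|w_0\|_{H^1}$ are below a threshold depending only on $M$, then $\psi$ exists throughout $I$ and $\|\psi-U\|_{S^s(I)}$ is as small as we wish. This reduces everything to showing $\|e\|_{N(I)}\to0$ as $|y|\to\infty$.

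For the error I would use the pointwise bound $\bigl||a+b|^\sigma(a+b)-|a|^\sigma a-|b|^\sigma b\bigr|\lesssim |a|^\sigma|b|+|a||b|^\sigma$, so that $e$ is controlled by the interaction terms $|u|^\sigma|\tilde v|+|u||\tilde v|^\sigma$; the distribution of the fractional $B^s$-derivatives over these products is exactly where the hypothesis $\sigma=4/d$ or $\sigma\ge\min\{1,4/d\}$ enters, guaranteeing enough regularity of the nonlinearity for the difference estimate. These interaction terms are concentrated where $u$ and $\tilde v$ overlap. Splitting space into a neighbourhood of $\supp u_0$ and its complement, on the complement $u$ is small by finite speed of disturbance, while on the neighbourhood $\tilde v$ is small since its data $v_0(\cdot-y)$ lives far away (the symmetric estimate). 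As $|y|$ large forces both regions to contribute little, one can choose $D_T$ so that $\|e\|_{N(0,T)}$ falls below the stability threshold, giving the first two assertions. For general (non-compactly-supported) $H^1$ data a preliminary approximation by compactly supported data is inserted, with the tails estimated directly.

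For the global statement take $I=(0,\infty)$. The hypothesis $u_0,v_0\in\mathcal{GD}$ yields finite $S^1$, hence finite $S^s$, norms, so $M<\infty$ over the whole half-line and the perturbation lemma is available globally (it subdivides $I$ into finitely many intervals of small Strichartz norm). The new feature is that $e$ must be small uniformly in time: the spatial separation furnished by finite speed of disturbance controls the interaction at early times, while the time-decay encoded in the finiteness of the global Strichartz norms controls it at late times, so that $\|e\|_{N(0,\infty)}\to0$ as $|y|\to\infty$. The lemma then gives global existence of $\psi$ with $\|\psi-U\|_{S^s(0,\infty)}<\epsilon$. To conclude $\psi\in\mathcal{GD}$ I would upgrade closeness from $S^s$ to $S^1$: since $\psi$ is global with $H^1$ data, a standard persistence-of-regularity argument bounds $\|\psi\|_{S^1(0,\infty)}$ in terms of $\|U\|_{S^1(0,\infty)}\le\|u\|_{S^1}+\|v\|_{S^1}<\infty$ and the already-controlled lower-order norms.

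I expect the main obstacle to be precisely this global-in-time error estimate: over an infinite interval spatial separation alone does not suffice, and the interplay between the finite speed of disturbance (effective for bounded times and large $|y|$) and the decay of the $\mathcal{GD}$ solutions (effective for large times) must be quantified carefully, presumably by splitting the time integral at a threshold tuned against $|y|$.
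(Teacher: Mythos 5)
Your proposal follows essentially the same route as the paper: both perturb around $U=u+\tilde v$, reduce everything to smallness of the interaction term $N(u,v)=|U|^\sigma U-|u|^\sigma u-|\tilde v|^\sigma \tilde v$ in a dual Strichartz/Besov norm --- obtained from finite speed of disturbance for bounded times and from the smallness of the global Strichartz tails for large times --- and close with a perturbative bootstrap, which is exactly what your invoked long-time stability lemma amounts to (the paper proves this bootstrap by hand, via an obstruction argument iterated over short intervals, and its Lemmas \ref{lema:injeccao} and \ref{lema:estimativafonte} carry out the fractional Besov-norm interaction estimate that you correctly flag as the place where $\sigma\ge 1$ enters). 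The only cosmetic difference is packaging: you cite the stability lemma as a black box while the paper reproves the needed version inline; the decompositions, the role of the tails of non-compactly-supported data, and the time-splitting for the global statement all match.
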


\begin{nota}
The value of $D_\infty$ depends on the global bound $M$ for $\NLS(u_0)$ and $\NLS(v_0)$, on the size of the tails of $u_0$ and $v_0$, and also on the time $T$ for which
$$
\|\NLS(u_0)\|_{S^s(T,\infty)}, \|\NLS(v_0)\|_{S^s(T,\infty)}\quad \mbox{ is small enough}.
$$
\end{nota}

\begin{nota}
Independently of the spatial dimension, the result is always true for $\sigma=4/d$. In the supercritical case, one could try to prove the concatenation result with similar arguments to those of \cite[Theorem 6.2.1]{cazenave}. However, the information given by the finite speed of disturbance concerns the solution itself and not its derivatives. As a consequence, one must try to prove global well-posedness by using as little derivatives as possible. This is achieved over the critical space $H^s$, with $\sigma=4/(d-2s)$ and $s>0$. To estimate properly the interaction between the two solutions in Besov spaces, we require that $\sigma\ge 1$.
\end{nota}

\begin{nota}
Observe that the concatenation of two initial data with positive energy, for large translations, also has positive energy. Thus the concatenation result does not contradict the Virial blow-up argument.
\end{nota}

It is important to notice that the second part of Theorem \ref{teocritico} can be iterated: one starts with two global solutions with linear decay and builds a new global solution with linear decay. Moreover, the $L^2$ and $H^1$ norms of the new initial data is the sum of the corresponding norms of the given data. As a consequence, one obtains
\begin{cor}\label{cor:largeglobal}
Set $\sigma = 4/d$ or $\sigma\ge \min\{1,4/d\}$. Given $K_1, K_2>0$, there exists $u_0\in \mathcal{GD}$ compactly supported such that
$$
\|u_0\|_2=K_1,\quad \|\nabla u_0\|_2=K_2.
$$
\end{cor}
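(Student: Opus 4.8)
The plan is to realise the prescribed pair $(K_1,K_2)$ by iterating the second part of Theorem \ref{teocritico}, starting from a single, suitably rescaled, compactly supported building block that already lies in $\mathcal{GD}$ because its $H^1$ norm falls below the threshold $\delta$ of \cite[Theorem 6.2.1]{cazenave}. The key structural fact I would exploit is that when two profiles have disjoint supports, the squares of their $L^2$ and $\dot H^1$ norms add exactly; hence concatenating $N$ translated copies of one block with pairwise disjoint supports multiplies both squared norms by $N$, while Theorem \ref{teocritico} (applied with $w_0=0$, which is certainly ``small enough'') keeps the resulting compactly supported datum in $\mathcal{GD}$.

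Concretely, I would fix a real profile $\phi\in C_c^\infty(\real^d)$ with $c_1:=\|\phi\|_2^2>0$ and $c_2:=\|\nabla\phi\|_2^2>0$, and set $\psi(x)=a\,\phi(x/b)$, so that $\|\psi\|_2^2=a^2b^dc_1$ and $\|\nabla\psi\|_2^2=a^2b^{d-2}c_2$. Choosing $b=(K_1/K_2)\sqrt{c_2/c_1}$ fixes the ratio $\|\nabla\psi\|_2/\|\psi\|_2$ to equal $K_2/K_1$, and then imposing $N\|\psi\|_2^2=K_1^2$ forces $a^2=K_1^2/(Nb^dc_1)$; with these choices the $\dot H^1$ identity $N\|\nabla\psi\|_2^2=K_2^2$ holds automatically. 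A direct computation then gives the clean relation $\|\psi\|_{H^1}^2=(K_1^2+K_2^2)/N$, so that taking $N>(K_1^2+K_2^2)/\delta^2$ yields $\|\psi\|_{H^1}<\delta$ and hence $\psi\in\mathcal{GD}$. For this $N$, the concatenation of $N$ disjoint copies of $\psi$ has exactly $\|\cdot\|_2=K_1$ and $\|\nabla\cdot\|_2=K_2$.

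It remains to carry out the $N$-fold concatenation inside $\mathcal{GD}$. I would set $U_1=\psi$ and, having built a compactly supported $U_k\in\mathcal{GD}$, apply the second part of Theorem \ref{teocritico} with $u_0=U_k$, $v_0=\psi$ and $w_0=0$ to obtain $U_{k+1}=U_k+\psi(\cdot-y_k)\in\mathcal{GD}$, choosing $|y_k|$ larger than both the threshold $D_\infty$ for this pair and the sum of the diameters of the supports of $U_k$ and $\psi$, so that the new copy lands in empty space and disjointness is preserved. After $N-1$ steps, $U_N$ is a sum of $N$ pairwise-disjoint translates of $\psi$, lies in $\mathcal{GD}$, is compactly supported, and has the required norms. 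I expect the only delicate point to be the bookkeeping of the thresholds $D_\infty$ along the iteration: each step enlarges both the support and the global Strichartz bound of $U_k$, so the admissible $D_\infty$ may grow from step to step. This is harmless, however, because only finitely many ($N-1$) concatenations are performed, so at each stage $D_\infty$ is finite and an admissible $|y_k|$ can be selected; no uniformity in the number of blocks is ever needed.
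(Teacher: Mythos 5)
Your proof is correct and follows essentially the same route the paper sketches in the remark preceding the corollary: build small, compactly supported pieces lying in $\mathcal{GD}$ via the small-data inclusion from \cite[Theorem 6.2.1]{cazenave}, then iterate the second part of Theorem \ref{teocritico} (with $w_0=0$) on disjointly supported translates so that the squared $L^2$ and $\dot H^1$ norms add exactly. Your rescaling computation and the choice $N>(K_1^2+K_2^2)/\delta^2$ simply make explicit the details the paper leaves implicit.
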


Our result indicates that blow-up behaviour is necessarily connected with how much localized is the initial data: if the initial data is made up of small $H^1$ pieces, sufficiently spread out in space, the corresponding solution is global. On the other hand, we point out that the Virial blow-up argument is also connected with the localization of the initial data, through both variance and energy. Though far from concrete, an underlying necessary and sufficient condition for blow-up becomes apparent.

\begin{nota}
The question of concatenation of global solutions with no decay properties is not a trivial matter: assume that Theorem \ref{teocritico} is applicable to global solutions without decay properties. If one could choose $v_0=0$ and $u_0=Q$, where $Q$ is the ground-state of (NLS), then the perturbed concatenation $u_0+v_0(\cdot-y)+w_0 =Q+w_0$ would give rise to solutions whose time of existence goes to infinity as $|y|\to \infty$. Therefore the solution with initial data $Q+w_0$ would be global for any $w_0\in H^1$ small enough. However, this contradicts the known instability result of ground-states of \cite{berescaz}.
\end{nota}

\vskip15pt
\textbf{Notation.} $L^p$ norms over all of $\real^d$ is be denoted by $\|\cdot\|_p$. Moreover, if the domain of integration is $\real^d$, we will often ommit it. We define 
$B_R(0)=\{x\in\real^d: |x|<R\}$. Finally, $\dist(A,B)$ is the distance between the sets $A$ and $B$.

%The question of concatenation with global solutions with no decay properties is not a trivial matter. Can one expect that, if the initial data are sufficiently far apart, then the nonlinear interactions are amenable over an infinite amount of time (even though there is no control of the individual solutions)? Notice, however, that since the concatenation operation avoids the possibility of falling in the conditions for the Virial blow-up, there is no trivial counter-example to this question. In that direction, we have
%\begin{teo}
%Set $\sigma=4/N$. Suppose that $u_0, v_0\in \Sigma$ are such that
%$$
%\|\NLS (u_0)\|_{L^\infty((0,\infty), H^1)},\ \|\NLS (u_0)\|_{L^\infty((0,\infty), H^1)}\le M,
%$$
%$$
%E(u_0)=E(v_0)=\parteim \int x\nabla u_0\overline{u}_0=\parteim \int x\nabla v_0\overline{v}_0 = 0.
%$$
%Given $\beta_1\neq\beta_2\in \real^N$, define
%$$
%u_0^{\beta_1}(x) :=u_0(x) e^{i\beta_1\cdot x},\quad v_0^{\beta_2}(x):= v_0(x)e^{i\beta_2\cdot x}.
%$$ 
%Then, for $\epsilon>0$, there exists $d_\infty>0$ such that, for $|y|>d_\infty$, the solution of (NLS) with initial condition
%$$
%w_0=u_0^{\beta_1} + v_0^{\beta_2}(\cdot-y)
%$$
%is global and
%$$
%\| \NLS(w_0) - \NLS(u_0^{\beta_1} )-\NLS( v_0^{\beta_2}(\cdot-y))\|_{S^0(0,T)} <\epsilon.
%$$
%\end{teo}
%
%We observe that, as a special case, one may choose $u_0, v_0$ to be bound-states of (NLS): indeed, since the variance of a bound-state is constant, Virial's identity implies that the nullity conditions of the theorem are verified. As a consequence, 

\section{Finite speed of disturbance}\label{sec:finitespeed}

Let us start with the linear equation
\begin{equation}\tag{LS}\label{LS}
iu_t + \Delta u=0,\quad u(0)=u_0\in H^1(\real^d).
\end{equation}
If one takes $\phi\in W^{1.\infty}(\real^d)$ real-valued, then
\begin{equation}
\frac{1}{2}\frac{d}{dt}\int \phi^2|u|^2 = 2\parteim \int \phi\overline{u}\nabla \phi\cdot\nabla u\le 2\|\phi u\|_2\|\nabla \phi\|_\infty\|\nabla u\|_2.
\end{equation}
Integrating this differential inequality,
\begin{equation}
\|\phi u(t)\|_2 \le \|\phi u_0\|_2 + 2t\|\nabla \phi\|_\infty \sup_{s\in [0,t]}\|\nabla u(s)\|_2 = \|\phi u_0\|_2 + 2t\|\nabla \phi\|_\infty \|\nabla u_0\|_2,
\end{equation}
since the $L^2$ norm of the gradient is preserved by \eqref{LS}. Now take two disjoint smooth open sets $A, B\subset \real^d$ and take $\phi\in W^{1,\infty}$ such that
$$
\phi \equiv 1\mbox{ on }B,\quad \phi \equiv 0\mbox{ on }A,\quad \|\nabla \phi\|_\infty <\frac{1}{\dist(A,B)}.
$$
Then
$$
\|u(t)\|_{L^2(B)}\le \frac{2t\|\nabla u_0\|_2}{\dist(A,B)} + \|u_0\|_{L^2(\real^d\setminus A)},\quad A,B\subset \real^d.
$$
In the special case where $\phi u_0\equiv 0$, one has 
\begin{equation}\label{eq:desigualphi}
\|\phi u(t)\|_2 \le 2\|\nabla \phi\|_\infty \|\nabla u_0\|_2 t.
\end{equation}
and so one obtains the
%An application of this inequality is the following: suppose that the diameter $2r$ of $\supp u_0$ is finite. Without loss of generality,  we assume that $\supp u_0$ is centered at the origin so that $\supp u_0\subset A=B_r(0)$. Take any set $B$ such that $\dist(A,B)=d>0$. Set $R=d+r$. Define
%\begin{equation}\label{escolhaphi}
%\phi(x)=\left\{\begin{array}{l}
%0,\ |x|<r\\
%(|x|-r)/(R-r),\ r<|x|<R\\
%1,\ |x|>R
%\end{array}\right.
%\end{equation}
%It is easy to check that $\|\nabla \phi\|_\infty=\frac{1}{R-r}$. Then \eqref{eq:desigualphi} implies the
\textit{finite speed of disturbance} for the \eqref{LS}:
\begin{equation}\label{eq:finitespeeddisturbancelinear}
\|u(t)\|_{L^2(B)}\le \frac{2\|\nabla u_0\|_2}{\dist(A,B)}t.
\end{equation}
This inequality tells us that, even though information may travel at any speed, the \textit{amount of information} that reaches some set $B$ grows (at most) linearly in time, with growth factor inversely proportional to the distance between the source of the information and the observation set. In another way, even though the higher frequencies travel faster, they carry a controlled amount of mass.
\begin{nota}
Fix $t\ge 0$. Given any $\gamma(t)\ge 1$, the choice $B=\real^d\setminus(A+B_{\gamma(t)}(0))$ in \eqref{eq:finitespeeddisturbancelinear} yields
$$
\|u(t)\|_{L^2(\real^d\setminus(A+B_{\gamma(t)}(0)))}\le \frac{2\|\nabla u_0\|_2t}{\gamma(t)}.
$$
For any $\epsilon>0$, if $\gamma(t)=\gamma t$, $\gamma=2\|\nabla u_0\|_2 /\epsilon $, we see that
$$
\|u(t)\|_{L^2(\real^d\setminus(A+B_{\gamma t}(0)))}\le \epsilon.
$$
This means that most of the total mass lies inside a specific cone of light, with speed given by the initial kinetic energy.
\end{nota}

\begin{nota}
Suppose that $B$ is such that, for some unit vector $v\in\real^d$,
$$\dist(A,B+vt)=\dist(A,B)+t,\quad  t>0.$$
Using the Galilean invariance $$u_b(x,t)=e^{i\frac{bv}{2}\left(x-\frac{bv}{2}t\right)}u(t,x-bvt),\quad b>0,
$$
one has, for any set $C$ with $\dist(A,C)>0$,
$$
\|u_b(t)\|_{L^2(C)}\le \frac{2\|\nabla u_b(0)\|_2}{\dist(A,C)}t.
$$
For each fixed $t>0$, take $C=B+bvt$. One then arrives at a more general estimate for the speed of disturbance
$$
\|u(t)\|_{L^2(B)}=\|u_v(t)\|_{L^2(B+bvt)}\le \frac{t}{\dist(A,B)+bt}\left(4\|\nabla u_0\|_2^2+b^2\|u_0\|_2^2\right)^{1/2},\quad b>0.
$$
Notice that, taking both $b, t\to\infty$, one has the trivial bound $\|u(t)\|_{L^2(B)}\le \|u_0\|_2$.
\end{nota}

In the (NLS) case, analogous computations yield 
\begin{equation}\label{eq:desigualphi2}
\|\phi u(t)\|_2 \le 2\|\nabla \phi\|_\infty \left(\sup_{s\in[0,t]}\|\nabla u(s)\|_2\right) t + \|\phi u_0\|_2.
\end{equation}
Choosing $\phi$ as in the linear case, one has
$$
\|u(t)\|_{L^2(B)}\le \frac{2t\sup_{s\in [0,t]} \|\nabla u(s)\|_2}{\dist(A,B)} + \|u_0\|_{L^2(\real^d\setminus A)},\quad A,B\subset \real^d.
$$
in the general case and, if $\supp u_0\subset A$, one obtains the finite speed of disturbance for the (NLS)
\begin{equation}\label{eq:finitespeeddisturbancenaolinear}
\|u(t)\|_{L^2(B)}\le \frac{2\sup_{s\in[0,t]}\|\nabla u(s)\|_2}{\dist(A,B)}t.
\end{equation}
In this case, we see that, as long as $u$ remains bounded in $H^1$, the solution will have finite speed of disturbance and the previous considerations are still valid. Another useful estimate can be obtained from \eqref{eq:desigualphi2}: using Gagliardo-Nirenberg's inequality,
\begin{align}
\|u(t)\|_{L^{\sigma+2}(B)}&\le\|\phi u(t)\|_{{\sigma+2}} \lesssim \|\phi u(t)\|_2^{1-\frac{ d\sigma}{2(\sigma+2)}}\|\nabla (\phi u(t))\|_2^\frac{d\sigma}{2(\sigma+2)}\\\label{eq:finitespeedLp}&\lesssim \left(\frac{2t\sup_{s\in [0,t]} \|\nabla u(s)\|_2}{\dist(A,B)} + \|u_0\|_{L^2(\real^d\setminus A)}\right)^{1-\frac{ d\sigma}{2(\sigma+2)}}\|\nabla (\phi u(t))\|_2^\frac{d\sigma}{2(\sigma+2)}\\&\lesssim \left(\frac{2t\sup_{s\in [0,t]} \|\nabla u(s)\|_2}{\dist(A,B)} + \|u_0\|_{L^2(\real^d\setminus A)}\right)^{1-\frac{ d\sigma}{2(\sigma+2)}}\\&\hskip15pt\times\left(\|\nabla u(t)\|_2^2 + \frac{1}{\dist(A,B)^2}\|u_0\|_2^2\right)^\frac{d\sigma}{4(\sigma+2)}.
\end{align}

\begin{nota}
An estimate similar to \eqref{eq:desigualphi2} has been used in \cite{merlemartel} to understand the interference between solitons centered at distant points and build multi-soliton solutions in the $L^2$-subcritical case. Another instance of such an estimate has also been used in \cite{ginibreveloscat} to show asymptotic completeness in $H^1$ of the defocusing (NLS).
\end{nota}

\begin{nota}
Consider the defocusing case $\lambda<0$ in the $L^2$-critical case. As it is well-known, given any initial data $u_0\in H^1(\real^d)\cap L^2(|x|^2 dx)=: \Sigma$, the corresponding solution $u=\NLS (u_0)$ scatters to a linear solution, \textit{i.e.}, there exists a unique $u_+\in \Sigma$ such that
$$
\|S(-t)u(t) - u_+\|_\Sigma \to 0,\quad t\to\infty.
$$
Thus one may define the forward scattering operator as the mapping $u_0\mapsto u_+$. We observe that the simple application of finite speed of disturbance can lead to an estimate for the scattering operator. In fact, using the \textit{lens transform} (see \cite{carles}), if $v$ is the solution of the nonlinear Schrödinger equation with an harmonic potential
$$
iv_t + \Delta v - |x|^2v + \lambda |v|^{4/d}v=0, \quad v(0)=u_0,
$$
then the Fourier transform $(u_+)^\wedge$ of $u_+$ is precisely $v(\pi/2)$ (this has been observed in \cite{tao}). This equation also enjoys finite speed of disturbance:
$$
\|v(t)\|_{L^2(B)}\le \frac{2t\sup_{s\in [0,t]} \|\nabla v(s)\|_2}{\dist(A,B)} + \|u_0\|_{L^2(\real^d\setminus A)},\quad A,B\subset \real^d.
$$
Moreover, by conservation of energy, one has
$$
\|\nabla v(t)\|_2 \lesssim \|v_0\|_\Sigma = \|u_0\|_\Sigma, \ t>0.
$$
Thus, taking $t=\pi/2$, one obtains
$$
\|(u_+)^\wedge\|_{L^2(B)} \le \frac{\pi \|u_0\|_\Sigma}{\dist(A,B)} + \|u_0\|_{L^2(\real^d\setminus A)},\quad A,B\subset \real^d.
$$
In particular, the localization of the initial data on the physical side implies a localization of the scattering state $u_+$ on the frequency side.
\end{nota}

\begin{nota}
Finite speed of perturbation, being a weaker version of the classical finite speed of propagation, can be observed in a larger number of equations. In particular, it would be interesting to study this concept for other dispersive PDE's for which one has infinite speed of propagation, such as the Korteweg-de-Vries equation.
\end{nota}

\section{Proof of the main result}

\begin{proof}[Proof of Theorem \ref{teocritico} for $\sigma=4/d$]
\textit{Step 1.} Before we proceed, we make a number of simplifications using the symmetries of the (NLS). First, we may consider $y=2De_1$, where $e_1\in\real^d$ is the first element of the canonic basis of $\real^d$. Moreover, due to the translation invariance, we may prove the result for the initial data $u_0(\cdot + y/2) + v_0(\cdot-y/2)=u_0(\cdot + De_1) + v_0(\cdot-De_1)$. In what follows, $\delta(D)$ will denote a decreasing function such that
$$
\delta(D)\to 0, \quad D\to\infty.
$$
Define $A^-=\{x\in \real^d: x_1<-D/2\}$ and $A^+=\{x\in\real^d: x_1>D/2 \}$. Since $u_0, v_0\in L^2$, 
\begin{equation}\label{eq:dadoscaudapequena}
\|u_0(\cdot+De_1)\|_{L^2(\real^d\setminus A^-)}, \ \|v_0(\cdot-De_1)\|_{L^2(\real^d\setminus A^+)}<\delta(D).
\end{equation}
Set $u=\NLS(u_0(\cdot+De_1))$, $v=\NLS(v_0(\cdot-De_1))$ and consider the initial value problem
$$
iw_t + \Delta w + |u+v+w|^{\sigma}(u+v+w)-|u|^\sigma u - |v|^\sigma v=0, w(0)=w_0\in H^1.
$$
Notice that
$$
\NLS(u_0(\cdot + De_1)+v_0(\cdot -De_1) + w_0)= u+v+w
$$
for as long as any three solutions exist. As a consequence, the local existence of $w$ as a $L^2$-solution is a trivial matter.

\vskip10pt
\noindent\textit{Step 2.} Since $T<T(u_0), T(v_0)$, there exists $M>0$ such that
\begin{equation}\label{eq:uniformboundT}
\|u\|_{L^\infty((0,T), H^1)}, \|v\|_{L^\infty((0,T), H^1)}\le M.
\end{equation}
One has
$$
||u+v+w|^{\sigma}(u+v+w)-|u|^\sigma u - |v|^\sigma v|\lesssim |v|^\sigma|u| + |u|^\sigma|v| + |u|^\sigma |w| + |v|^\sigma |w| + |w|^{\sigma+1}
$$
We write $B^+=\{x\in\real^d: x_1> 0\}$ and $B^-=\{x\in\real^d: x_1<0\}$. The idea is that, due to \eqref{eq:dadoscaudapequena}, $u$ is small over $B^+$ and $v$ is small over $B^-$.

Setting $\rho=(\sigma+2)/(\sigma+1)$,
\begin{align}\label{eq:estimativafonte}
\||u|^\sigma v\|_{L^\rho}&\lesssim \||u|^\sigma v\|_{L^{\rho}(B^+)}+\||u|^\sigma v\|_{L^\rho(B^-)} \\&\lesssim \|u\|_{L^{\sigma+2}(B^+)}^\sigma \|v\|_{L^{\sigma+2}(B^+)} + \|u\|_{L^{\sigma+2}(B^-)}^\sigma \|v\|_{L^{\sigma+2}(B^-)}=I_1 +I_2
\end{align}
We estimate $I_1$: we use \eqref{eq:finitespeedLp} on the first term and the uniform bound \eqref{eq:uniformboundT},
\begin{align*}
I_1\lesssim \|u\|_{L^{\sigma+2}(B^+)}^{\sigma}\|v\|_{L^{\sigma+2}} &\lesssim \left(\frac{TM}{\dist(A^-, B^+)} + \|u_0\|_{L^2(\real^d\setminus A^-)}\right)^{\frac{\sigma^2}{\sigma+2}}M^{\frac{2\sigma}{\sigma+2}+1}\\ &\lesssim \left(\frac{TM}{D} + \delta(D)\right)^{\frac{\sigma^2}{\sigma+2}}M^{\frac{2\sigma}{\sigma+2}+1}\\&\lesssim \delta(D).
\end{align*}
The same reasoning can be applied to $I_2$, which implies that $\||u|^\sigma v\|_{L^\rho}<\delta(D)$. Analogously, we also have $\||v|^\sigma u\|_{L^{\rho}} \lesssim \delta(D)$.

We apply Strichartz estimates to the Duhamel formula for $w$ on a fixed interval $(0,t)$:
\begin{align*}
\|w\|_{S^0(0,t)}&\lesssim \|w_0\|_2 + t^{\frac{1}{\rho}}\delta(D) + \||u|^\sigma |w|\|_{L^\rho((0,t),L^{\rho})} \\&+\||v|^\sigma |w|\|_{L^\rho((0,t),L^{\rho})}+ \|w\|_{S^0(0,t)}^{\sigma+1}\\&\lesssim  \|w_0\|_2+t^{\frac{1}{\rho}}\delta(D)  + \|u\|_{L^{\sigma+2}((0,t),L^{\sigma+2})}^\sigma\|w\|_{S^0(0,t)} \\&+ \|v\|_{L^{\sigma+2}((0,t),L^{\sigma+2})}^\sigma\|w\|_{S^0(0,t)}+ \|w\|_{S^0(0,t)}^{\sigma+1}\\&\lesssim  \|w_0\|_2+t^{\frac{1}{\rho}}\delta(D)  +t^\frac{\sigma}{\sigma+2}M^\sigma \|w\|_{S^0(0,T)} + \|w\|_{S^0(0,t)}^{\sigma+1}.
\end{align*}
We choose $T_0$ such that
$$
T_0^\frac{\sigma}{\sigma+2}M^\sigma\lesssim \frac{1}{2},
$$
so that
$$
\|w\|_{S^0(0,t)}\lesssim  \|w_0\|_2+\delta(D) + \|w\|_{S^0(0,t)}^{\sigma+1},\quad t<T_0.
$$
A standard obstruction argument then implies that there exists $\eta>0$ small such that, if 
$$ \|w_0\|_2+\delta(D)<\eta',\ \eta'<\eta, 
$$ 
then $w$ exists (as an $L^2$ solution) up to time $T_0$ and $\|w(T_0)\|_2<2\eta'$. This process may be iterated as long as the $L^2$ norm of $w$ remains below $\eta$. Thus, for sufficiently small $\|w_0\|_2$ and large $D$, one guarantees that $w$ exists up to time $T$ and that 
$$
\|w\|_{S^0(0,T)}<\eta<\epsilon.
$$
This concludes the proof of the first part of Theorem \ref{teocritico}. 
\vskip10pt
\noindent\textit{Step 3.} For the second part of the Theorem, fix $\delta>0$ small and choose $T$ large enough such that
$$
\|u\|_{S^0(T,\infty)}, \|v\|_{S^0(T,\infty)}\le \delta.
$$
Applying the first part of the Theorem, for $D_T$ large, $w$ is defined up to time $T$ with
$$
\|w\|_{S^0(0,T)}\le \delta.
$$
Recalling that $\rho=(\sigma+2)/(\sigma+1)$, one easily checks that
$$
\||u|^\sigma v\|_{L^{\rho}((T,\infty), L^{\rho})} \lesssim \|u\|_{S^0(T,\infty)}^\sigma \|v\|_{S^0(T,\infty)}\le \delta, 
$$
$$
\||v|^\sigma u\|_{L^{\rho}((T,\infty), L^{\rho})} \lesssim \|v\|_{S^0(T,\infty)}^\sigma \|u\|_{S^0(T,\infty)}\le \delta.
$$
Let $T^*$ be the maximal time of existence of $w$. Applying Strichartz estimates to the Duhamel formula
$$
w(t) = S(t-T)w(T) -i\lambda \int_T^t S(t-s)\left(|u+v+w|^{\sigma}(u+v+w)-|u|^\sigma u - |v|^\sigma v\right)ds,
$$
with $T<t<T^*$, one has
\begin{align*}
\|w\|_{S^0(T,t)}&\lesssim \|w(T)\|_2 + \delta + \||u|^\sigma |w|\|_{L^\rho((T,t),L^{\rho})} \\&+\||v|^\sigma |w|\|_{L^\rho((T,t),L^{\rho})}+ \|w\|_{S^0(T,t)}^{\sigma+1}\\&\lesssim  \delta + \|u\|_{L^{\sigma+2}((T,t),L^{\sigma+2})}^\sigma\|w\|_{S^0(T,t)} \\&+ \|v\|_{L^{\sigma+2}((T,t),L^{\sigma+2})}^\sigma\|w\|_{S^0(T,t)}+ \|w\|_{S^0(T,t)}^{\sigma+1}\\&\lesssim  \delta + \delta \|w\|_{S^0(T,t)} + \|w\|_{S^0(T,t)}^{\sigma+1},\quad T<t<T^*.
\end{align*}
Thus
$$
\|w\|_{S^0(T,t)}\lesssim \delta + \|w\|_{S^0(T,t)}^{\sigma+1},\quad T<t<T^*,
$$
which, for $\delta$ sufficiently small, implies that
$$
\|w\|_{S^0(T,t)}<\epsilon,\ T<t<T^*.
$$
The blow-up alternative (on $L^2$) now implies that $T^*=\infty$ and that $\|w\|_{S^0(0,\infty)}<\epsilon$. Moreover, since the initial data is in $H^1$, by persistence of regularity, $w$ is globally defined in $H^1$.
\end{proof}
\vskip15pt

The remainder of this section will focus on the proof of Theorem \ref{teocritico} for the supercritical case. To that end, we shall work on $H^s$, with $\sigma=4/(d-2s)$. An admissible pair of particular importance is
$$
\rho=d(\sigma+2)/(d+s\sigma), \quad \gamma=\frac{4(\sigma+2)}{\sigma(d-2s)}=\sigma+2.
$$
Recall that the Besov space $B^s_{\rho,2}$ may be endowed with the norm
$$
\|u\|_{B^s_{\rho,2}}^2 =  \|u\|_\rho^2+\|u\|_{\dot{B}^s_{\rho,2}}^2:=  \|u\|_\rho^2+\int_0^\infty \left(\tau^{-s}\sup_{|y|<\tau}\|u(\cdot-y)-u\|_\rho\right)^2\frac{d\tau}{\tau}.
$$

\begin{lema}\label{lema:injeccao}
One has
$$
W^{1,\rho}(\real^d)\hookrightarrow B^s_{\rho, 2}(\real^d)\hookrightarrow L^{\frac{\sigma\rho\rho'}{\rho-\rho'}}(\real^d).
$$
\end{lema}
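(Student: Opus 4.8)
The plan is to prove the two inclusions separately. The left inclusion $W^{1,\rho}\hookrightarrow B^s_{\rho,2}$ is elementary and uses directly the explicit norm recorded above; the right inclusion $B^s_{\rho,2}\hookrightarrow L^{\sigma\rho\rho'/(\rho-\rho')}$ is the Sobolev embedding for Besov spaces, whose only non-routine ingredient is the verification that the stated target exponent is precisely the Sobolev conjugate of the pair $(\rho,s)$.

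For the first inclusion I would estimate the modulus of continuity $\omega(\tau):=\sup_{|y|<\tau}\|u(\cdot-y)-u\|_\rho$ by the two competing bounds $\omega(\tau)\le\min\{\,2\|u\|_\rho,\ \tau\|\nabla u\|_\rho\,\}$. The second bound comes from $u(\cdot-y)-u=-\int_0^1 y\cdot\nabla u(\cdot-ty)\,dt$ together with Minkowski's integral inequality (first for smooth $u$, then by density), which gives $\|u(\cdot-y)-u\|_\rho\le|y|\,\|\nabla u\|_\rho$; the first bound is trivial. Splitting the defining integral at $\tau=1$ then yields
\[
\int_0^\infty\bigl(\tau^{-s}\omega(\tau)\bigr)^2\frac{d\tau}{\tau}\ \le\ \|\nabla u\|_\rho^2\int_0^1\tau^{1-2s}\,d\tau\ +\ 4\|u\|_\rho^2\int_1^\infty\tau^{-1-2s}\,d\tau,
\]
and both integrals converge exactly because $0<s<1$ (the first needs $s<1$, the second needs $s>0$). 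Adding $\|u\|_\rho^2$ gives $\|u\|_{B^s_{\rho,2}}\lesssim\|u\|_{W^{1,\rho}}$.

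For the second inclusion I would first carry out the exponent bookkeeping. Writing $\rho'=\rho/(\rho-1)$ one simplifies $\sigma\rho\rho'/(\rho-\rho')=\sigma\rho/(\rho-2)$, and then, using the identity $\sigma(d-2s)=4$ coming from $\sigma=4/(d-2s)$, one computes $\rho-2=4/(d+s\sigma)$ and $d-s\rho=d(d-2s)/(d+s\sigma)$; dividing these shows that $q:=\sigma\rho\rho'/(\rho-\rho')$ satisfies $1/q=1/\rho-s/d$. Since $s>0$ forces $\rho>2$ and $q>\rho>2$, the inclusion is the classical Sobolev embedding $B^s_{\rho,2}\hookrightarrow L^q$. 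A self-contained argument runs through Littlewood–Paley: after passing to the equivalent square-function norm on $\dot B^s_{\rho,2}$, one bounds, for the blocks $P_j u$,
\[
\|u\|_q\lesssim\Bigl\|\bigl(\textstyle\sum_j|P_ju|^2\bigr)^{1/2}\Bigr\|_q\le\bigl(\textstyle\sum_j\|P_ju\|_q^2\bigr)^{1/2},
\]
where the last step is Minkowski's inequality in $L^{q/2}$ (valid because $q\ge2$), and then applies Bernstein, $\|P_ju\|_q\lesssim 2^{jd(1/\rho-1/q)}\|P_ju\|_\rho=2^{js}\|P_ju\|_\rho$, recognizing the resulting sum as $\|u\|_{\dot B^s_{\rho,2}}$.

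The main obstacle is not a deep one: both embeddings are classical once the arithmetic $1/q=1/\rho-s/d$ is in hand, and the only place genuine care is required is that chain of exponent identities, since an error there would misalign the target space with the nonlinear estimates in which the lemma is later used. The secondary point to keep in mind is that the right inclusion relies on $q\ge2$ (equivalently $\rho>2$), which is guaranteed here by $s>0$ and must be flagged if one wants to invoke the Minkowski step above.
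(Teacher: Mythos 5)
Your proposal is correct and follows essentially the same route as the paper: for $W^{1,\rho}\hookrightarrow B^s_{\rho,2}$ you use the identical splitting of the modulus-of-continuity integral at $\tau=1$, with the bound $\|u(\cdot-y)-u\|_\rho\le |y|\,\|\nabla u\|_\rho$ below and the trivial bound $2\|u\|_\rho$ above, convergence resting on $0<s<1$ exactly as in the paper's proof. The only divergence is the second embedding, which the paper dismisses in one line as ``a direct consequence of Sobolev's injection'': you additionally verify the exponent identity, namely that $\sigma\rho\rho'/(\rho-\rho')=\sigma\rho/(\rho-2)$ satisfies $1/q=1/\rho-s/d$ via $\sigma(d-2s)=4$ (your arithmetic checks out), and you sketch the Littlewood--Paley/Bernstein proof of that embedding, which fills in, correctly, a step the paper delegates to a citation.
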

\begin{proof}
The second injection is a direct consequence of Sobolev's injection. For the first, we take $u\in C^\infty_0(\real^d)$ and write
$$
\|u\|_{B^s_{\rho,2}}^2\lesssim  \|u\|_\rho^2+\int_0^1 \left(\tau^{-s}\sup_{|y|<\tau}\|u(\cdot-y)-u\|_\rho\right)^2\frac{d\tau}{\tau} + \int_1^\infty \left(\tau^{-s}\sup_{|y|<\tau}\|u(\cdot-y)-u\|_\rho\right)^2\frac{d\tau}{\tau}
$$
The characterization of Sobolev spaces using translation operators (see, for example, \cite[Proposition 8.5]{brezis}) implies that
$$
\|u(\cdot-y)-u\|_\rho\lesssim \|\nabla u\|_\rho|y|
$$
Hence
\begin{align*}
\|u\|_{B^s_{\rho,2}}^2 &\lesssim \|u\|_{\rho}^2 + \int_0^1 \left(\tau^{-s}\sup_{|y|<\tau}\|\nabla u\|_\rho|y|\right)^2\frac{d\tau}{\tau} + \int_1^\infty \left(\tau^{-s}\sup_{|y|<t}\|u\|_\rho\right)^2\frac{d\tau}{\tau}\\&\lesssim \|u\|_{W^{1,\rho}}^2\left(1+\int_0^1 \frac{d\tau}{\tau^{1-2s}} + \int_1^\infty \frac{d\tau}{\tau^{1+s}}\right)\lesssim \|u\|_{W^{1,\rho}}^2.
\end{align*}
\end{proof}

As in the first step of the proof of Theorem 1 in the critical case, from now on, $\delta(D)$ will denoted a decreasing function of $D$ such that $\delta(D)\to 0$ as $D\to\infty$.

\begin{lema}\label{lema:estimativafonte}
Fix $\sigma>4/d$ such that $\sigma\ge 1$. Given $u_0, v_0\in H^1$ and $T<T(u_0), T(v_0)$, if one writes $u=\NLS(u_0(\cdot+De_1))$ and $v=\NLS(v_0(\cdot-De_1))$, then
$$
\left\||u+v|^\sigma(u+v) - |u|^\sigma u - |v|^\sigma v\right\|_{L^{\gamma'}((0,T), B^s_{\rho',2})}\le \delta(D).
$$
\end{lema}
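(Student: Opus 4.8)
The plan is to reduce the whole interaction to its two ``crossed'' pieces and then to \emph{decouple} the smallness coming from the spatial separation of $u$ and $v$ (which costs no derivatives) from the regularity of the nonlinearity (which costs derivatives but need only be kept bounded). First I would use the elementary pointwise inequality
$$
\bigl||u+v|^\sigma(u+v)-|u|^\sigma u-|v|^\sigma v\bigr|\lesssim |u|^\sigma|v|+|v|^\sigma|u|,
$$
valid for every $\sigma>0$, which reduces the claim to bounding $\||u|^\sigma v\|_{L^{\gamma'}((0,T),B^s_{\rho',2})}$ and, by the symmetry $u\leftrightarrow v$, its companion. The key device is that, since $0<s<1$, the space $B^s_{\rho',2}$ is the real interpolation space $(L^{\rho'},W^{1,\rho'})_{s,2}$ — the modulus-of-smoothness norm written just before Lemma~\ref{lema:injeccao} is precisely the one realizing this identity. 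This yields, pointwise in $t$,
$$
\||u|^\sigma v(t)\|_{B^s_{\rho',2}}\lesssim \||u|^\sigma v(t)\|_{\rho'}^{\,1-s}\,\||u|^\sigma v(t)\|_{W^{1,\rho'}}^{\,s}.
$$
This is the concrete form of the principle stated in the introduction: the $L^{\rho'}$ factor, which involves no derivatives, will be made small by finite speed of disturbance, while the $W^{1,\rho'}$ factor need only stay bounded uniformly in $D$.

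For the first factor I would mimic the computation of the critical case. Writing $\real^d=B^+\cup B^-$ and using Hölder, $\||u|^\sigma v\|_{L^{\rho'}(B^+)}\lesssim \|u\|_{L^{q}(B^+)}^\sigma\|v\|_{L^{r}}$ for suitable $q\in[2,2^\ast)$ and $r$, and symmetrically on $B^-$ with $u$ and $v$ interchanged. On $B^+$ the solution $u$ comes from data concentrated, up to tails of size $\delta(D)$, in $A^-$ with $\dist(A^-,B^+)\gtrsim D$; hence the finite-speed-of-disturbance estimate \eqref{eq:finitespeedLp} (or its $L^q$ analogue, obtained by the same Gagliardo--Nirenberg interpolation) together with the uniform $H^1$-bound \eqref{eq:uniformboundT} gives $\|u\|_{L^{q}(B^+)}\lesssim TM/D+\delta(D)=\delta(D)$, and on $B^-$ one uses the analogous smallness of $v$. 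Thus $\sup_{t\in(0,T)}\||u|^\sigma v(t)\|_{\rho'}\lesssim\delta(D)$.

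The main obstacle is the $W^{1,\rho'}$ factor. Differentiating produces the term $|u|^{\sigma-1}|\nabla u|\,|v|$, which is exactly where the hypothesis $\sigma\ge 1$ is indispensable, so that $|u|^{\sigma-1}$ is a genuine power and the product is meaningful. The difficulty is that $\rho'<2$ is a low integrability exponent, so the $H^1$ bound alone does not control $\nabla(|u|^\sigma v)$ in $L^{\rho'}$ when $\sigma$ is large; one must instead distribute the factors among the norms actually available on $(0,T)$, namely $L^\infty_t H^1$ \emph{and} the finite Strichartz norms $\|u\|_{S^s(0,T)},\|v\|_{S^s(0,T)}$ (with the embedding of Lemma~\ref{lema:injeccao}), reconciling the time exponents by Hölder on the finite interval. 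This is where the precise admissible pair $(\gamma,\rho)$ and the subcriticality $\sigma<4/(d-2s)$ enter, ensuring every Hölder exponent lands in an admissible Sobolev/Gagliardo--Nirenberg range.

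Once $\||u|^\sigma v\|_{L^{s\gamma'}((0,T),W^{1,\rho'})}\le C(M,T)$ is established, inserting the two bounds into the interpolation inequality and integrating in time gives
$$
\||u|^\sigma v\|_{L^{\gamma'}((0,T),B^s_{\rho',2})}\lesssim \delta(D)^{\,1-s}\,C(M,T)^{\,s}\xrightarrow[D\to\infty]{}0,
$$
which, after renaming, is the desired bound by $\delta(D)$; the companion term $\||v|^\sigma u\|$ is treated identically. The only genuinely delicate point is the exponent bookkeeping in the derivative estimate of the previous paragraph, the $L^{\rho'}$-smallness and the interpolation scheme being essentially forced by the structure of the problem.
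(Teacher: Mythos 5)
Your opening reduction is a genuine gap. From the pointwise bound $|N(u,v)|\lesssim|u|^\sigma|v|+|v|^\sigma|u|$, with $N(u,v)$ as in \eqref{eq:termofonte}, you conclude that it suffices to bound $\||u|^\sigma v\|_{L^{\gamma'}((0,T),B^s_{\rho',2})}$ and its companion. But for $s>0$ the $B^s_{\rho',2}$ norm is \emph{not} monotone under pointwise domination: $|f|\le g$ controls $\|f\|_{\rho'}$ by $\|g\|_{\rho'}$, but says nothing about the modulus of smoothness $\sup_{|y|<\tau}\|f(\cdot-y)-f\|_{\rho'}$ (a rapidly oscillating $f$ can be pointwise dominated by a small, smooth $g$ while having arbitrarily large Besov norm). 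So the very first step cannot be made, and this is precisely why the paper never makes it: there one estimates the translation differences $N(u,v)(\cdot-y)-N(u,v)$ of the \emph{full} nonlinearity, exploiting that for $\sigma\ge1$ its differential is itself ``crossed'' (small whenever $u$ or $v$ is small), then splits into the half-spaces $B^\pm$, controls $\|u\|_{L^\alpha(C^+)}$ by finite speed of disturbance, and controls $\|u(\cdot-y)-u\|_{L^\rho(B^+)}$ by an interpolation between localization smallness and $\|\nabla u\|_\rho|y|$, the exponent $\epsilon$ being chosen so that $\int_0^1\tau^{-1-2s+2(1-\epsilon)}\,d\tau$ converges. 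A second, more minor, inaccuracy: your claim $\sup_{t}\||u|^\sigma v(t)\|_{\rho'}\lesssim\delta(D)$ with H\"older exponents $q,r\le 2d/(d-2)$ is not available for all admissible $\sigma$ (e.g.\ $d=3$, $\sigma=3$ gives $1/\rho'=19/30$, while $\sigma/q+1/r\ge(\sigma+1)(d-2)/2d=2/3$), so the ``small'' factor must be allowed to lose a power of $B(t)=\|u(t)\|_{W^{1,\rho}}+\|v(t)\|_{W^{1,\rho}}$, which lies only in $L^\gamma$ in time; likewise, it is criticality $\sigma=4/(d-2s)$ (hence $\gamma=\sigma+2$), not subcriticality, that makes the time exponents close.

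The good news is that your central device, the real-interpolation inequality $\|f\|_{B^s_{\rho',2}}\lesssim\|f\|_{\rho'}^{1-s}\|f\|_{W^{1,\rho'}}^s$, is correct and does yield a proof genuinely different from (and shorter than) the paper's, \emph{provided you apply it to $N(u,v)$ itself}. The pointwise inequality is then used only where it is legitimate, on the $L^{\rho'}$ factor: splitting into $B^\pm$ and localizing via a cut-off, Gagliardo--Nirenberg and \eqref{eq:desigualphi2}, exactly as in the paper's estimate of its terms $I_3,I_4$, gives $\|N(u,v)(t)\|_{\rho'}\lesssim\delta(D)\bigl(1+B(t)^{\sigma+1}\bigr)$. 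For the $W^{1,\rho'}$ factor, $\sigma\ge1$ yields $|\nabla N(u,v)|\lesssim\bigl(|u|^{\sigma-1}+|v|^{\sigma-1}\bigr)\bigl(|v||\nabla u|+|u||\nabla v|\bigr)$, and H\"older with $\sigma/\alpha+1/\rho=1/\rho'$, $\alpha=\sigma\rho\rho'/(\rho-\rho')$, together with Lemma \ref{lema:injeccao}, gives $\|N(u,v)(t)\|_{W^{1,\rho'}}\lesssim B(t)^{\sigma+1}$; since $\gamma'(\sigma+1)=\gamma$, integration in time then gives $\|N(u,v)\|_{L^{\gamma'}((0,T),B^s_{\rho',2})}\lesssim\delta(D)^{1-s}\bigl(T^{1/\gamma'}+M^{\sigma+1}\bigr)$, which is a valid $\delta(D)$ after renaming. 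So the strategy is salvageable, but as written the proof fails at its first step, and the gradient estimate you defer as ``bookkeeping'' is in fact the entire content of the hypothesis $\sigma\ge1$ and should be carried out.
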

\begin{proof}
Set 
\begin{equation}\label{eq:termofonte}
N(u,v)=|u+v|^\sigma(u+v) - |u|^\sigma u - |v|^\sigma v
\end{equation}
and 
$$
M=\|u\|_{L^\infty((0,T), H^1)} + \|u\|_{L^\gamma((0,T), W^{1,\rho})} + \|v\|_{L^\infty((0,T), H^1)} + \|v\|_{L^\gamma((0,T), W^{1,\rho})},
$$
$$
B(t)=\|u(t)\|_{W^{1,\rho}} + \|v(t)\|_{W^{1,\rho}}.
$$
It is easy to check, using Hölder's inequality, that
$$
\|B(t)^{\sigma+1}\|_{L^{\gamma'}(0,T)} \lesssim T^{\frac{4-\sigma(d-2s)}{4}} M^{\sigma+1}
$$
Once again, write 
$$
A^-=\{x\in \real^d: x_1<-D/2\},\quad A^+=\{x\in\real^d: x_1>D/2 \},
$$
$$
B^-=\{x\in \real^d: x_1<0\},\quad B^+=\{x\in \real^d: x_1>0\}.
$$ 
Since $u_0, v_0\in L^2$, 
\begin{equation}\label{eq:dadoscaudapequena2}
\|u_0(\cdot+De_1)\|_{L^2(\real^d\setminus A^-)}, \ \|v_0(\cdot-De_1)\|_{L^2(\real^d\setminus A^+)}<\delta(D).
\end{equation}
We set $0<a,b<1$ such that
$$
\|z\|_{L^\alpha}\lesssim \|z\|_{L^\rho}^a\|\nabla z\|_{L^\rho}^{1-a}, \quad \|z\|_{L^\rho}\lesssim \|z\|_{L^2}^b\|\nabla z\|_{L^2}^{1-b},\quad \alpha=\sigma\rho\rho'/(\rho-\rho'), \ z\in C^\infty_0(\real^d).
$$

Then
\begin{align*}
\|N(u,v)\|_{\dot{B}^s_{\rho',2}}^2&=\int_0^\infty \left(\tau^{-s}\sup_{|y|<\tau}\|N(u,v)(\cdot-y)-N(u,v)\|_{\rho'}\right)^2\frac{d\tau}{\tau}\\&\lesssim \int_0^1 \left(\tau^{-s}\sup_{|y|<\tau}\|N(u,v)(\cdot-y)-N(u,v)\|_{L^{\rho'}(B^+)}\right)^2\frac{d\tau}{\tau}\\&+\int_0^1 \left(\tau^{-s}\sup_{|y|<\tau}\|N(u,v)(\cdot-y)-N(u,v)\|_{L^{\rho'}(B^-)}\right)^2\frac{d\tau}{\tau}
\\&+\|N(u,v)\|_{L^{\rho'}(B^+)}^2 +\|N(u,v)\|_{L^{\rho'}(B^-)}^2\lesssim I_1^2 + I_2^2 + I_3^2+I_4^2.
\end{align*}
We treat $I_1$ as follows: setting $C^\pm=B^\pm + B_1(0)$ and recalling that $W^{1,\rho}\hookrightarrow B^s_{\rho,2}\hookrightarrow L^\alpha$, $\alpha=\sigma\rho\rho'/(\rho-\rho')$,
\begin{align*}
\|N(u,v)(\cdot-y)&-N(u,v)\|_{L^{\rho'}(B^+)}\lesssim \left(\|u\|_{L^{\alpha}(C^+)}^{\sigma-1}+ \|v\|_{L^{\alpha}(C^+)}^{\sigma-1} \right)\\\times&\left(\|u\|_{L^{\alpha}(C^+)}\|v(\cdot-y)-v\|_{L^\rho(B^+)} + \|v\|_{L^{\alpha}(C^+)}\|u(\cdot-y)-u\|_{L^\rho(B^+)}\right)\\&\lesssim B(t)^{\sigma-1}\|u\|_{L^\alpha(C^+)}\|v(\cdot-y)-v\|_{L^\rho} + B(t)^\sigma\|u(\cdot-y)-u\|_{L^\rho(B^+)}
\end{align*}
For the first term, take a smooth cut-off function $\phi$ with $\phi\equiv 1$ over $C^+$ and $\phi\equiv 0$ over $A^-$. Then, from Gagliardo-Nirenberg and finite speed of disturbance,
\begin{align*}
\|u\|_{L^\alpha(C^+)}&\lesssim \|\phi u\|_{L^\alpha}\lesssim \|\phi u\|_{L^\rho}^a\|\nabla (\phi u)\|_{L^\rho}^{1-a} \lesssim \|\phi u\|_{L^2}^{ab}\|\nabla (\phi u)\|_{L^2}^{a(1-b)}\|u\|_{W^{1,\rho}}^{1-a} \\&\lesssim \|\phi u\|_{L^2}^{ab} M^{a(1-b)}B(t)^{1-a} \lesssim \left(\frac{TM}{D} + \|u_0\|_{L^2(\real^d\setminus A^-)}\right)^{ab}M^{a(1-b)}B(t)^{1-a} \\ &\lesssim \delta(D)B(t)^{1-a}
\end{align*}
For the second term, taking $\epsilon>0$ small, for $|y|<1$,
\begin{align*}
\|u(\cdot-y)-u\|_{L^\rho(B^+)} &= \|u(\cdot-y)-u\|_{L^\rho(B^+)}^\epsilon \|u(\cdot-y)-u\|_{L^\rho(B^+)}^{1-\epsilon} \\&\lesssim \left(\|u\|_{L^\rho(C^+)}\right)^\epsilon\left(\|u(\cdot-y)-u\|_{L^\rho}\right)^{1-\epsilon}\\&\lesssim \left(\|\phi u\|_{L^2}^b\|\nabla (\phi u)\|_{L^2}^{1-b}\right)^\epsilon \left(\|\nabla u\|_{L^\rho}|y|\right)^{1-\epsilon}\\&\lesssim \delta(D) B(t)^{1-\epsilon} |y|^{1-\epsilon}
\end{align*}
Hence
\begin{align*}
|I_1|&\lesssim \delta(D) B(t)^{\sigma-a} \|v\|_{B^s_{\rho,2}} + \delta(D) B(t)^{\sigma+1-\epsilon}\left(\int_0^1 \frac{1}{\tau^{1+2s-2(1-\epsilon)}}d\tau\right)^{1/2} \\&\lesssim \delta(D)\left(B(t)^{\sigma+1-a}  + B(t)^{\sigma+1-\epsilon} \right)\\&\lesssim \delta(D)(1 + B(t)^{\sigma+1})
\end{align*}
and so
$$
\|I_1\|_{L^{\gamma'}(0,T)}\lesssim \delta(D)(T^{\frac{1}{\gamma'}} + T^{\frac{4-(d-2s)\sigma}{4}}) \lesssim \delta(D).
$$
For the $I_3$ term,
\begin{align*}
|I_3|&\lesssim \||u|^\sigma v\|_{L^{\rho'}(B^+)} + \||v|^\sigma u\|_{L^{\rho'}(B^+)} \lesssim \|u\|_{L^\alpha(B^+)}^\sigma\|v\|_\rho + \|v\|_\alpha^\sigma \|u\|_{L^\rho(B^+)}\\&\lesssim \left(\delta(D)B(t)^{1-a}\right)^\sigma\|v\|_{W^{1,\rho}} + \|v\|_{W^{1,\rho}}^\sigma\delta(D)\\&\lesssim \delta(D) B(t)^{\sigma(1-a)+1} + \delta(D)B(t)^\sigma \lesssim \delta(D)(1+B(t)^{\sigma+1}).
\end{align*}
As for the $I_1$ term, this implies $\|I_3\|_{L^{\gamma'}(0,T)}\lesssim \delta(D)$.
The estimates for $I_2$ and $I_4$ are analogous. Thus
$$
\|N(u,v)\|_{L^{\gamma'}((0,T),\dot{B}^s_{\rho',2})}\lesssim \delta(D).
$$
Finally,
\begin{align*}
\|N(u,v)\|_{L^{\gamma'}((0,T),B^s_{\rho',2})}&\lesssim \|N(u,v)\|_{L^{\gamma'}((0,T),L^{\rho'})} +  \|N(u,v)\|_{L^{\gamma'}((0,T),\dot{B}^s_{\rho',2})}\\&\lesssim \|I_3\|_{L^{\gamma'}(0,T)} + \|I_4\|_{L^{\gamma'}(0,T)} + \delta(D)\lesssim \delta(D).
\end{align*}
\end{proof}
\begin{proof}[Proof of Theorem \ref{teocritico} for $\sigma>4/d$]
We follow closely the proof of the $L^2$-critical case $\sigma=4/d$. Set $y=2De_1$, $u=\NLS(u_0(\cdot+y/2))$ and $v=\NLS(v_0(\cdot-y/2))$. Once again, consider the initial value problem
$$
iw_t + \Delta w + |u+v+w|^{\sigma}(u+v+w)-|u|^\sigma u - |v|^\sigma v=0, w(0)=w_0\in H^1.
$$
Define
$$
M(t)=\|u\|_{L^\gamma((0,t), W^{1,\rho})} + \|v\|_{L^\gamma((0,t), W^{1,\rho})}.
$$
Applying Strichartz estimates on the Duhamel formula for $w$ on a time interval $0<t<T$,
\begin{align*}
\|w\|_{S^s(0,t)} &\lesssim \|w_0\|_{H^s} + \left(\|u\|_{L^{\gamma}((0,t), B^s_{\rho,2})}^\sigma + \|v\|_{L^{\gamma}((0,t), B^s_{\rho,2})}^\sigma + \|w\|_{S^s(0,t)}^\sigma \right)\|w\|_{S^s(0,t)} \\&+ \|N(u,v)\|_{L^{\gamma'}((0,t), B^s_{\rho',2})}.
\end{align*}
where $N(u,v)$ is defined as in \eqref{eq:termofonte}. It follows from Lemmata \ref{lema:injeccao} and \ref{lema:estimativafonte} that
$$
\|w\|_{S^s(0,t)} \lesssim \|w_0\|_{H^s} +\delta(D)+ M(t)^\sigma\|w\|_{S^s(0,t)} + \|w\|_{S^s(0,t)}^{\sigma+1}.
$$
Choose $T_0$ such that
$$
M^\sigma(T_0)\lesssim \frac{1}{2}.
$$
Then
$$
\|w\|_{S^s(0,t)} \lesssim \|w_0\|_{H^s} +\delta(D) + \|w\|_{S^s(0,t)}^{\sigma+1}.
$$
An obstruction argument now implies that, if 
\begin{equation}\label{eq:condicaoobstrucao}
\|w_0\|_{H^s} +\delta(D)<\eta',\quad \eta'<\eta,
\end{equation}
then $w$ exists (as an $H^s$ solution) up to time $T_0$ and $\|w(T_0)\|_{H^s}<2\eta'$. For small enough $\|w_0\|_{H^s}$ and $D$ large, the process can be iterated so that $w$ is defined on $[0,T]$ and $\|w\|_{S^s(0,T)}<\epsilon$. The proof of the global existence is completely analogous to the proof for the critical case.
\end{proof}

\section{Further comments}

Consider the weakly coupled nonlinear Schrödinger system
\begin{equation}\tag{2-NLS}\label{2NLS}
\left\{\begin{array}{l}
iu_t + \Delta u + k_{11}|u|^{2p}u + k_{12}|v|^{p+1}|u|^{p-1}u = 0\\
iv_t + \Delta v + k_{22}|v|^{2p}v + k_{12}|u|^{p+1}|v|^{p-1}v = 0
\end{array}\right.,\ u,v\in C([0,T), H^1(\real^d)),
\end{equation}
where $k_{ij}\in \real$ and $1\le p<2/(d-2)^+$. Using the standard techniques available for the (NLS), one may show that the initial value problem is locally well-posed for $u_0, v_0\in H^s(\real^d)$ if $p<2/(d-2s)^+$ and is conditionally locally well-posed if $p=2/(d-2s)^+$. We set $T(u_0,v_0)$ as the maximal time of existence of the solution with initial conditions $u_0,v_0$ and write $(u,v)=(2\NLS)(u_0,v_0)$. Finally, write
$$
\mathcal{GD}_2=\left\{ (u_0,v_0)\in (H^1(\real^d))^2: T(u_0,v_0)=\infty,\ \|(2\NLS)(u_0,v_0)\|_{(S^1(0,\infty))^2}<\infty \right\}.
$$

It is easy to check that \eqref{2NLS} has finite speed of disturbance for each component: if $u_0$ has compact support and $B$ is a set such that $\dist(\supp u_0, B)>0$, then
$$
\|u(t)\|_{L^2(B)}\le \frac{2\sup_{s\in[0,t]}\|\nabla u(s)\|_2}{\dist(\supp u_0,B)}t.
$$
The same is valid for $v$. Consequently, one may prove the analogous concatenation result:
\begin{prop}
Set $p = 2/d$ or $p\ge 2$. Given two initial data $\mathbf{u}_0, \mathbf{v}_0\in (H^1(\real^d))^2$,  a fixed time $T<T(\mathbf{u}_0), T(\mathbf{v}_0)$ and $\epsilon>0$, there exists $D_T>0$ such that, for $\mathbf{w}_0\in (H^1(\real^d))^2$ sufficiently small 
$$
T(\mathbf{u}_0+\mathbf{v}_0(\cdot-y) + \mathbf{w}_0)>T, \quad |y|>D_T
$$
and, taking $s$ such that $p\ge 2/(d-2s)$,
$$
\| (2\NLS)(\mathbf{u}_0+\mathbf{v}_0(\cdot-y) + \mathbf{w}_0) - (2\NLS)(\mathbf{u}_0)-(2\NLS)(\mathbf{v}_0(\cdot-y))\|_{(S^s(0,T))^2} <\epsilon.
$$
Moreover, if $\mathbf{u}_0, \mathbf{v}_0\in \mathcal{GD}$, there exists $D_\infty>0$ such that $\mathbf{u}_0+\mathbf{v}_0(\cdot - y)\in \mathcal{GD}$, $|y|>D_\infty$, and 
$$
\| (2\NLS)(\mathbf{u}_0+\mathbf{v}_0(\cdot-y) + \mathbf{w}_0) - (2\NLS)(\mathbf{u}_0)-(2\NLS)(\mathbf{v}_0(\cdot-y))\|_{(S^s(0,\infty))^2} <\epsilon.
$$
\end{prop}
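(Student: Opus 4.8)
The plan is to transcribe the proof of Theorem \ref{teocritico}, treating the two system-solutions exactly as the two scalar solutions were treated there. After the usual symmetry reductions (set $y=2De_1$ and translate) we work with the concatenation $\mathbf{u}_0(\cdot+De_1)+\mathbf{v}_0(\cdot-De_1)$; write $\mathbf{u}=(2\NLS)(\mathbf{u}_0(\cdot+De_1))$ and $\mathbf{v}=(2\NLS)(\mathbf{v}_0(\cdot-De_1))$, each a pair in $(H^1(\real^d))^2$, and let $\mathbf{w}$ be the remainder pair, which solves
$$
i\partial_t\mathbf{w}+\Delta\mathbf{w}+\mathbf{F}(\mathbf{u}+\mathbf{v}+\mathbf{w})-\mathbf{F}(\mathbf{u})-\mathbf{F}(\mathbf{v})=0,\quad \mathbf{w}(0)=\mathbf{w}_0,
$$
where $\mathbf{F}$ denotes the vector-valued nonlinearity of \eqref{2NLS}, homogeneous of degree $2p+1$. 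As before I would split into the $L^2$-critical case $p=2/d$, worked entirely in the $(S^0)^2$ framework, and the supercritical case $p\ge 2$, worked in $(S^s)^2$ with $p=2/(d-2s)$ and the Besov spaces of Lemma \ref{lema:estimativafonte}.

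The heart of the matter is the analogue of Lemma \ref{lema:estimativafonte}, namely the source bound $\|\mathbf{N}(\mathbf{u},\mathbf{v})\|_{L^{\gamma'}((0,T),(B^s_{\rho',2})^2)}\le\delta(D)$ for $\mathbf{N}(\mathbf{u},\mathbf{v})=\mathbf{F}(\mathbf{u}+\mathbf{v})-\mathbf{F}(\mathbf{u})-\mathbf{F}(\mathbf{v})$. The structural observation that makes this work is that every term surviving in $\mathbf{N}$ — including the weakly coupled cross terms — carries at least one factor drawn from the solution $\mathbf{u}$ and at least one from the solution $\mathbf{v}$, because the pure-$\mathbf{u}$ and pure-$\mathbf{v}$ contributions cancel exactly. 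Since \eqref{2NLS} has finite speed of disturbance in each component, and both components of $\mathbf{u}$ (resp. $\mathbf{v}$) have their initial mass concentrated, up to a $\delta(D)$ tail, in $A^-$ (resp. $A^+$), every such mixed product is small in $L^{\rho'}(B^+)$ and in $L^{\rho'}(B^-)$: on $B^+$ a factor from $\mathbf{u}$ is small, on $B^-$ a factor from $\mathbf{v}$ is small. I would then run the componentwise Gagliardo--Nirenberg interpolation of Lemma \ref{lema:estimativafonte} verbatim to convert these finite-speed bounds into the required $\dot{B}^s_{\rho',2}$ estimate.

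With the source estimate secured, the remaining steps are routine. Strichartz estimates on the Duhamel formula for $\mathbf{w}$ give, for $0<t\le T$,
$$
\|\mathbf{w}\|_{(S^s(0,t))^2}\lesssim\|\mathbf{w}_0\|_{(H^s)^2}+\delta(D)+M(t)^{2p}\|\mathbf{w}\|_{(S^s(0,t))^2}+\|\mathbf{w}\|_{(S^s(0,t))^2}^{2p+1},
$$
with $M(t)$ collecting the Strichartz norms of $\mathbf{u}$ and $\mathbf{v}$. Splitting $(0,T)$ into finitely many subintervals on which $M(\cdot)^{2p}\lesssim 1/2$ and applying the obstruction argument as in the first proof yields, for $\|\mathbf{w}_0\|_{(H^s)^2}$ small and $D$ large, existence up to $T$ with $\|\mathbf{w}\|_{(S^s(0,T))^2}<\epsilon$. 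For the global statement I would pick $T$ so that the $S^s(T,\infty)$-norms of $\mathbf{u},\mathbf{v}$ are small — possible since $\mathbf{u}_0,\mathbf{v}_0\in\mathcal{GD}_2$ — repeat the same computation on $(T,\infty)$, invoke the blow-up alternative to force $T^*=\infty$, and finally upgrade to $(H^1)^2$ by persistence of regularity.

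The hard part will be the supercritical source estimate, which is precisely where the coupling is felt. In Lemma \ref{lema:estimativafonte} the hypothesis $\sigma\ge 1$ was used to distribute the translation difference of $N(u,v)$ by factoring out $(\|u\|^{\sigma-1}+\|v\|^{\sigma-1})$; for \eqref{2NLS} the corresponding manipulation of a cross term such as $|v|^{p+1}|u|^{p-1}u$ requires the individual pieces $|u|^{p-1}u$ and $|v|^{p+1}$ to admit Lipschitz-type translation-difference estimates, and it is exactly this that forces the assumption $p\ge 2$ (so that $p-1\ge 1$). Checking that the combinatorially heavier difference $\mathbf{N}(\mathbf{u},\mathbf{v})(\cdot-y)-\mathbf{N}(\mathbf{u},\mathbf{v})$ still reduces, term by term, to products each carrying one small factor from $\mathbf{u}$ and one from $\mathbf{v}$, with interpolation exponents behaving as in Lemma \ref{lema:estimativafonte}, is the delicate bookkeeping the proof must confirm.
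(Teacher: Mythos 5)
Your proposal is correct and takes exactly the route the paper intends: the paper gives no separate proof of this proposition, only the observation that \eqref{2NLS} enjoys finite speed of disturbance in each component, after which the proof of Theorem \ref{teocritico} is meant to be transcribed verbatim --- which is precisely your plan. Your two structural observations --- that $\mathbf{F}(\mathbf{u}+\mathbf{v})-\mathbf{F}(\mathbf{u})-\mathbf{F}(\mathbf{v})$ consists only of terms mixing the two solutions (so componentwise finite speed of disturbance makes every such term small on both $B^+$ and $B^-$), and that the exponent $p-1$ in the coupling term $|v|^{p+1}|u|^{p-1}u$ is what forces the hypothesis $p\ge 2$ in the Besov-space step --- are exactly the points that justify the transcription.
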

When $k_{11}, k_{22}<0$, any initial data of the form $\mathbf{u}_0=(u_0,0)$ or $\mathbf{v}_0=(0,v_0)$, with $u_0, v_0\in H^1(\real^d)\cap L^2(|x|^2dx)$, is in $\mathcal{GD}_2$: the system \eqref{2NLS} is reduced to a defocusing (NLS) and the solutions are global and present linear decay. As a consequence, we have
\begin{cor}
Set $p = 2/d$ or $p\ge 2$. Moreover, suppose that $k_{11}, k_{22}<0$. Given $u_0,v_0\in H^1(\real^d)\cap L^2(|x|^2dx)$, there exists $D_\infty$ such that $(u_0,v_0(\cdot-y))\in\mathcal{GD}_2$, for any $|y|>D_\infty$.
\end{cor}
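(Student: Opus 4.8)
The plan is to view the pair $(u_0, v_0(\cdot-y))$ as a concatenation of two elements of $\mathcal{GD}_2$ and then to quote the second (``Moreover'') part of the preceding Proposition with the perturbation $\mathbf{w}_0=0$. Concretely, I would set $\mathbf{u}_0=(u_0,0)$ and $\mathbf{v}_0=(0,v_0)$, so that $\mathbf{u}_0+\mathbf{v}_0(\cdot-y)=(u_0,v_0(\cdot-y))$ is exactly the datum in the statement. Everything then reduces to checking that the two building blocks $\mathbf{u}_0$ and $\mathbf{v}_0$ belong to $\mathcal{GD}_2$: once this is known, the Proposition produces a threshold $D_\infty$ such that $(u_0,v_0(\cdot-y))\in\mathcal{GD}_2$ for all $|y|>D_\infty$, which is the desired conclusion.

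So the first step is to verify the membership of the building blocks. When the second component of the initial data vanishes, uniqueness forces the second component of the solution to remain identically zero: indeed, every nonlinear term in the second equation of \eqref{2NLS} carries a factor of $v$ (here one uses $p\ge 1$, so that $|v|^{p-1}v\equiv 0$ when $v\equiv 0$), hence $(u,0)$ solves the system provided $u$ solves the scalar equation $iu_t+\Delta u + k_{11}|u|^{2p}u=0$. Since $k_{11}<0$, this is a \emph{defocusing} (NLS) with nonlinearity exponent $\sigma=2p$, which is $H^1$-subcritical in the admissible range and $L^2$-critical-or-supercritical ($\sigma=4/d$ when $p=2/d$, $\sigma=2p\ge 4$ when $p\ge 2$). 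For data in $H^1(\real^d)\cap L^2(|x|^2\,dx)$ the pseudoconformal conservation law yields global existence together with linear decay, hence $\|u\|_{S^1(0,\infty)}<\infty$; therefore $\mathbf{u}_0=(u_0,0)\in\mathcal{GD}_2$, and symmetrically $\mathbf{v}_0=(0,v_0)\in\mathcal{GD}_2$. This is precisely the observation recorded just before the statement.

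It then remains only to feed this into the concatenation result. Applying the ``Moreover'' part of the Proposition to $\mathbf{u}_0,\mathbf{v}_0\in\mathcal{GD}_2$ with $\mathbf{w}_0=0$ (and $s$ chosen so that $p\ge 2/(d-2s)$) gives $D_\infty$ with $\mathbf{u}_0+\mathbf{v}_0(\cdot-y)\in\mathcal{GD}_2$ for $|y|>D_\infty$, and the identification above closes the argument. The only genuinely analytic input beyond the Proposition is the finiteness of the $S^1(0,\infty)$ Strichartz norm for the decoupled scalar problem; this is where the weight $L^2(|x|^2\,dx)$ is essential, since it is the classical scattering theory for the defocusing equation (via the variance/pseudoconformal identity) that converts the bound on the data into the uniform decay needed for membership in $\mathcal{GD}_2$. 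I would expect this verification, rather than the concatenation step itself, to be the main point requiring care.
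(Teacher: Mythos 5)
Your proposal is correct and matches the paper's own argument: the paper likewise takes $\mathbf{u}_0=(u_0,0)$ and $\mathbf{v}_0=(0,v_0)$, notes that with $k_{11},k_{22}<0$ the system decouples to a defocusing scalar (NLS) whose solutions with data in $H^1(\real^d)\cap L^2(|x|^2dx)$ are global with linear decay (hence both pairs lie in $\mathcal{GD}_2$), and then invokes the second part of the Proposition. Your extra care about the decoupling (using $p\ge 1$) and the classical pseudoconformal/scattering input simply makes explicit what the paper asserts in one sentence.
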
 
Thus blow-up behaviour can only appear if the initial supports of the two components are sufficiently close to each other. We recall that, if $k_{12}>0$ is large, blow-up behavior is possible, by the usual Virial argument.

\section{Acknowledgements}
The author was partially suported by Fundação para a Ciência e Tecnologia, through the grants UID/MAT/04561/2013 and SFRH/BD/96399/2013. The author is indebted to the referee for its numerous comments and suggestions.

\small
\noindent \textsc{Sim\~ao Correia}\\
CMAF-CIO and FCUL \\
\noindent Campo Grande, Edif\'icio C6, Piso 2, 1749-016 Lisboa (Portugal)\\
\verb"sfcorreia@fc.ul.pt"\\

\end{document}